\documentclass[11pt,letterpaper]{amsart}
\usepackage{fancyhdr}
\usepackage{bm}
\usepackage{hyperref}
\usepackage{graphicx} % Required for inserting images
\usepackage{nicematrix}
\usepackage{amsthm,amssymb,amsmath}
\usepackage[T1]{fontenc}
\usepackage[utf8]{inputenc}
\usepackage{hyperref}
\usepackage{lipsum}
\usepackage{mathrsfs}
\usepackage{enumitem}
\usepackage{stmaryrd}
\usepackage{setspace}
\usepackage{yfonts}
\usepackage{float}
\usepackage{mathtools}
\usepackage{parskip}
\usepackage[all,cmtip]{xy}
\usepackage{tikz-cd}
\tikzcdset{row sep/normal=50pt, column sep/normal=50pt}

\newtheorem{lemma}{Lemma}[section]
\newtheorem{remark}[lemma]{Remark}

\newtheorem{theorem*}{Theorem}

\newtheorem{example*}[lemma]{Example}

\newtheorem{conjecture}[lemma]{Conjecture}
\newtheorem{manualtheoreminner}{Theorem}
\newenvironment{manualtheorem}[1]{%
  \renewcommand\themanualtheoreminner{#1}%
  \manualtheoreminner
}{\endmanualtheoreminner}
\usepackage{blindtext}

\usepackage[backend=biber, style=numeric, sorting=nyt]{biblatex}
\title{Proof of Miyanishi's conjecture on endomorphisms of varieties}
\author{\small{Supravat Sarkar}}
\date{}
\begin{document}

\begin{abstract}
    If $X$ is a quasi-projective variety over a field $k$ and $\phi$ a birational endomorphism of $X$ that is injective outside a closed subset of codimension $\geq 2$, we prove that $\phi$ is an automorphism. This generalizes an old theorem of Ax and proves a conjecture of Miyanishi. A key step in our proof is a finiteness result on class groups, which is of interest in its own right.
\end{abstract}
\maketitle
\begin{center}
\textbf{Keywords}: Miyanishi conjecture
\end{center}
\begin{center}
\textbf{MSC Number: 14B05, 14C22} 
\end{center}

\section{Introduction}

The study of endomorphisms of algebraic varieties is a very interesting and fruitful area of research. For normal projective varieties, existence of several natural classes of endomorphisms impose strong restrictions on the singularity and the global geometry of the variety, see \cite{fakhruddin2002questions},\cite{meng2020building}, \cite{meng2022non}, \cite{paranjape1989self}, \cite{totaro2023endomorphisms} and \cite{sarkar2025images}.

On the other hand, for endomorphisms of non-complete varieties, very little is known. Let $X$ be a reduced separated scheme of finite type over a field $k$. Ax proved in \cite{ax1969injective} that any injective endomorphism of $X$ is bijective. Later Nowak proved in \cite{nowak1994injective} that if $k=\overline{k}$ and char $k=0$, any bijective endomorphism of $X$ is an automorphism. So, when $k=\overline{k}$ and char $k=0$, any injective endomorphism of $X$ is an automorphism.

A natural question is whether the global injectivity hypothesis in the above statement can be weakened. Even for smooth $\mathbb{C}$-varieties, the endomorphism $\phi$ may not be an automorphism if we just assume $\phi$ is injective on a dense open set, as the example: $\phi:\mathbb{A}^2\to \mathbb{A}^2$ given by $\phi(x,y)=(x,xy)$ shows. The next question to ask is whether $\phi$ is an automorphism if $\phi$ is injective on an open set which is large enough in the sense that its complement has codimension $\geq 2$. In \cite{miyanishi2005open}, Miyanishi conjectured the answer to be yes. 
\begin{conjecture}\label{Miyanishi}
    Let $X$ be a $\mathbb{C}$-variety and $\phi: X\to X$ be an endomorphism that is injective outside a closed subset of codimension $\geq 2$ in $X$. Then $\phi$ is an isomorphism.
\end{conjecture}

Strictly speaking, there was another part in the Conjecture of Miyanishi, which was proved in {\cite[Theorem 1.2]{das2022endomorphisms}} using ideas of \cite{kaliman2005theorem}.

In \cite{kaliman2005theorem}, it is proved that Conjecture \ref{Miyanishi} is true for $X$ whenever it is true for the normalization of $X$. So we can assume $X$ to be normal. Under the hypothesis of Conjecture \ref{Miyanishi}, it is clear that $\phi$ is birational with exceptional locus having codimension $\geq 2$. So
Conjecture \ref{Miyanishi} is clearly true when $\dim X\leq 2$. By {\cite[Section II.4.4]{shafarevich2016basic}}, Conjecture \ref{Miyanishi} holds when $X$ is $\mathbb{Q}$-factorial.  Conjecture \ref{Miyanishi} is also known to be true when  $X$ is affine or complete by \cite{kaliman2005theorem}. To the best knowledge of the author, the above is essentially everything that is known about Conjecture \ref{Miyanishi} till now. Some other cases of Conjecture \ref{Miyanishi} appeared also in \cite{biswas2025injective}. But we had difficulty following their argument, as they crucially use {\cite[Corollary 4.9(e)]{kallstrom2013purity}} and {\cite[Second sentence in Remark 4.7]{kallstrom2013purity}}, which need additional assumptions, as stated in the recent version \cite{Kallstrom2010purity}. Some of these results were also used in \cite{asano2025endomorphisms}.

The goal of this paper is to prove Conjecture \ref{Miyanishi} for all quasi-projective varieties. We in fact prove the following Theorem, which can be thought of as a more general version of Conjecture \ref{Miyanishi} which works over fields of arbitrary chracteristic.
\begin{manualtheorem}{A}\label{main}
    Let $X$ be a quasi-projective variety over any field $k$, and $\phi:X\to X$ a birational morphism that is injective outside a closed subset of codimension $\geq 2$ in $X$. Then $\phi$ is an isomorphism.
\end{manualtheorem}

In characteristic $0$, the birationality of $\phi$ in Theorem \ref{main} is immediate from the injectivity hypothesis, but in positive characteristic this birationality assumption is needed for Theorem \ref{main} to be true, as the example of Frobenius map of $\mathbb{P}^n$ shows.

In the proof, we have to talk about varieties over non-algebraically closed fields. So let us fix our convention: for any field $k$, a \textit{variety} over $k$ (or a $k$-variety) is an integral separated scheme of finite type over $k$. Unless otherwise stated, a point of a variety will always mean a closed point. For a pure dimensional scheme $X$ of finite type over a field, we will denote the group of Weil divisors, that is, codimension $1$ cycles on $X$ by $\text{Div }X$, and the class group, that is, Chow group of codimension $1$ cycles by $\textrm{Cl } X$. If $X$ is a normal variety, we identify the Picard group $\textrm{Pic } X$ as a subgroup of $\textrm{Cl } X$.  For a birational map $f:Y\dashrightarrow X$ of normal varieties, $\text{Ex}(\phi)$ will denote the exceptional locus of $\phi$: the complement of the largest open subset on which $\phi$ is defined and gives an isomorphism onto its image. Also, for a scheme $Y$, we will denote the underlying topological space by $|Y|$. We call a field finitely generated if it is finitely generated over its prime field.

\section{Proof of Theorem \ref{main}}

The key steps of the proof are the following two Lemmas, which are also interesting on their own.
\begin{lemma}\label{fg}
    Let $X$ be a variety over a finitely generated field $k$. Then the following holds:
    \begin{enumerate}
        \item The $\mathbb{Q}$-vector space $\textrm{Cl }(X)\otimes \mathbb{Q}$ is finite dimensional,
        \item If $X$ is birational to a geometrically normal and proper variety over $k$, then $\textrm{Cl }X$ is a finitely generated abelian group.
    \end{enumerate}
\end{lemma}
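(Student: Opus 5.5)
The plan for (1) is to reduce to the Picard group of a smooth projective variety and use the finite generation of N\'eron--Severi groups together with the Lang--N\'eron theorem; for (2) the plan is to control torsion via Kummer theory. Throughout I use the excision sequence: for a closed $Z\subset X$ with complement $U$, the sequence $\bigoplus_{D\subset Z}\mathbb{Z}[D]\to \textrm{Cl }X\to \textrm{Cl }U\to 0$ is exact, and the left term is finitely generated. Hence finite generation (or finite-dimensionality after $\otimes\mathbb{Q}$) of $\textrm{Cl}$ can be checked on any dense open subset, and it passes to quotients.

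\textbf{Part (1).} Since $k$ is finitely generated it is excellent, so the regular locus of $X$ is a dense open set, and by excision we may shrink $X$ to a regular affine open $U$. Then $\textrm{Cl }U=\textrm{Pic }U$. By de Jong's alteration theorem there is a finite extension $k'/k$, a smooth projective $k'$-variety $Y$, and a generically finite dominant map $Y\dashrightarrow X$ of degree $d$. After shrinking $U$ we may assume $Y$ contains an open $V$ with a finite flat map $V\to U$ of degree $d$. Then $\pi_*\pi^*=d$ on $\textrm{Pic }U$, so $\textrm{Pic }U\otimes\mathbb{Q}$ is a quotient of $\textrm{Pic }V\otimes\mathbb{Q}$, which in turn is a quotient of $\textrm{Pic }Y\otimes\mathbb{Q}$. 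It therefore suffices to show that $\textrm{Pic }Y$ is finitely generated. The group $\textrm{Pic }Y$ injects into $\textrm{Pic}_{Y/k'}(k')$. Its image modulo $\textrm{Pic}^0_{Y/k'}(k')$ lies in the N\'eron--Severi group of $Y_{\overline{k'}}$, which is finitely generated. Moreover $(\textrm{Pic}^0_{Y/k'})_{\rm red}$ is an abelian variety over the finitely generated field $k'$, so its $k'$-points form a finitely generated group by Lang--N\'eron (Mordell--Weil over finitely generated fields, including the finite-field case). Consequently $\textrm{Pic }Y$ is finitely generated, and (1) follows.

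\textbf{Part (2).} Let $Z$ be the geometrically normal proper model. $X$ and $Z$ share a dense open subset $W$, so $\textrm{Cl }X$ is an extension of a quotient of $\textrm{Cl }Z$ by a finitely generated group. It thus suffices to show that $\textrm{Cl }Z$ is finitely generated. By (1), $\textrm{Cl }Z\otimes\mathbb{Q}$ is finite dimensional, so it is enough to show that $\textrm{Cl }Z[n]$ is finite for the exponent $n$ occurring in the alteration argument. Indeed, pulling Weil divisors back along the alteration (as Cartier divisors on $Z_{\rm reg}$, whose complement has codimension $\ge2$) and pushing forward multiplies by $d$. So the kernel of $\textrm{Cl }Z\to\textrm{Pic }Y$ is killed by $d$, and $\textrm{Cl }Z$ is an extension of a subgroup of a finitely generated group by a $d$-torsion group, once we know that group is finite.

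For the torsion, write $\textrm{Cl }Z[d]=\textrm{Pic}(Z_{\rm reg})[d]$ and use the Kummer sequence
\[0\to \mathcal{O}(Z_{\rm reg})^*/d\to H^1(Z_{\rm reg},\mu_d)\to \textrm{Pic}(Z_{\rm reg})[d]\to 0.\]
Geometric normality and properness give $\mathcal{O}(Z_{\rm reg})^*=\mathcal{O}(Z)^*=K^*$ with $K=H^0(Z,\mathcal{O}_Z)$ a finite separable extension of $k$. Hochschild--Serre then identifies the quotient $H^1(Z_{\rm reg},\mu_d)/(K^*/K^{*d})$ with a subgroup of $H^1(Z_{\rm reg}\times_k k^s,\mu_d)^{G}$. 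For $d$ prime to ${\rm char}\,k$ the latter is finite by finiteness of geometric \'etale cohomology, and geometric normality ensures the base change is still normal, with $Z_{{\rm reg}}\times k^s$ having complement of codimension $\ge 2$. This is exactly where the hypothesis of (2) enters: without it the infinite group $k^*/k^{*d}$ need not be cancelled.

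\textbf{Main obstacle.} The hardest step is the $p$-primary torsion when ${\rm char}\,k=p>0$. There I would replace \'etale by fppf cohomology with $\mu_{p}$ and try to bound $H^1_{\rm fppf}(Z_{\rm reg}\times k^s,\mu_p)$ via $d\log$ into closed $1$-forms, which form a finite-dimensional space over $k^s$ because $Z$ is proper. One then needs a separate argument that its $G$-invariant part modulo the constants is finite. Alternatively, I would try to choose the alteration with degree prime to $p$ (Gabber's $\ell'$-alterations) so that the $p$-part is avoided altogether.
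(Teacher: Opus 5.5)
Your part (1) is correct and takes a different route from the paper. The paper proves (2) first and deduces (1) from it: it passes to a finite extension $K$ over which the normalizations of the geometric components are defined, then pushes forward along $X_K\to X$ after $\otimes\mathbb{Q}$. You prove (1) directly, from an alteration plus finite generation of $\textrm{Pic}$ of a smooth projective variety (theorem of the base and Lang--N\'eron). Your route makes (1) independent of (2), and (1) is all that the proof of Theorem A uses. One small repair: over an imperfect $k'$, the scheme $(\textrm{Pic}^0_{Y/k'})_{\rm red}$ is only guaranteed to be an abelian variety after a finite purely inseparable extension. So apply Lang--N\'eron there and then take the subgroup of $k'$-points.

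The gap is in part (2) when $\textrm{char }k=p>0$. Your reduction is sound: $\textrm{Cl }Z$ is finitely generated once $\ker\pi^*\subset\textrm{Cl }Z[d]$ is finite. (Note that $\pi^*$ lands in $\textrm{Pic}(\pi^{-1}(Z_{\rm reg}))$, a quotient of $\textrm{Pic }Y$ by boundary classes, not in $\textrm{Pic }Y$ itself; this is harmless.) But the Kummer/Hochschild--Serre argument only bounds the prime-to-$p$ part, and the $p$-primary part is left open whenever $p\mid d$.

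This is the heart of (2), not a technicality. Your prime-to-$p$ argument works verbatim for every normal proper $Z$:
\begin{itemize}
\item normality and properness already give $\mathcal{O}(Z_{\rm reg})^*=K^*$;
\item $H^1(Z_{\rm reg}\times_k k^s,\mu_d)$ is finite for any finite type scheme once $d$ is invertible.
\end{itemize}
So, contrary to your remark, geometric normality is never used there. The hypothesis of (2) can only matter through the $p$-part, which is exactly what is missing.

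Neither proposed repair works.
\begin{itemize}
\item Gabber's refinement gives alterations of degree prime to a chosen prime $\ell\neq p$, not prime to $p$. No general alteration theorem in characteristic $p$ lets you take $p\nmid d$.
\item The fppf sketch stops exactly at the needed finiteness statement, and its setup is flawed. When $k$ is not finite, $k^s$ is imperfect, so $H^1_{\rm fppf}(k^s,\mu_p)=(k^s)^*/((k^s)^*)^p\neq 0$. Base change to $k^s$ therefore does not kill the constants, and there is no Galois descent from $\overline{k}$ to fall back on.
\end{itemize}

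The paper avoids any torsion analysis: it gets (2) in one stroke from the cited theorem of Koll\'ar together with N\'eron's generalization of Mordell--Weil. Your argument for (2) needs some input of that strength, one that controls $\textrm{Cl }Z[p^\infty]$, in positive characteristic. In characteristic $0$ your argument is complete.
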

\begin{proof}
    First note that by excision exact sequence {\cite[Proposition 1.8]{fulton2013intersection}}, if $X$ is birational to $Y$ then $(1)$ (or $(2)$) is true for $X$ if and only if it is true for $Y$. Thus $(2)$ follows immediately from {\cite[Theorem 17]{kollar2018mumford}} and Néron's result in \cite{neron1952problemes} generalizing Mordell-Weil Theorem (see also {\cite[Corollary 7.2]{conrad2006chow}}). 
    
    Now we prove $(1)$. As $X$ is birational to a normal projective variety, we can assume without loss of generality that $X$ is normal projective. Now $(1)$ follows from the following two observations:
    \begin{enumerate}
        \item[$(i)$] \underline{$\textrm{Cl }X_K$ is finitely generated for a finite extension $K\supset k$:}

        Let $X_i$'s be the irreducible components of $X_{\overline{k}}$, and $p_i:Y_i\to X_i$ be the normalization. There is a finite extension $K\supset k$ over which all $Y_i, X_i$ and $p_i$'s are defined. Denote them by a subscript $K$. As $Y_{i,K}$ is geometrically normal and proper over $K$, by $(2)$ we have $\textrm{Cl }Y_{i,K}$ is finitely generated. As $p_{i,K}$ is birational, $\textrm{Cl }X_{i,K}$ is also finitely generated for every $i$. So $\textrm{Cl }X_K$, being a surjective image of $\oplus_i\textrm{Cl } X_{i,K}$, is also finitely generated.
        \item[$(ii)$] \underline{The pushforward induces a surjection $\textrm{Cl }(X_K)\otimes \mathbb{Q}\twoheadrightarrow \textrm{Cl }(X)\otimes \mathbb{Q}$:}
        
        Let $f:X_K\to X$ be the base change under $\textrm{Spec }K\to \textrm{Spec }k$, a finite flat map of relative dimension $0$. For any prime divisor $D$ of $X$, let $E$ be an irreducible component of $f^{-1}(D)$. As $f|_D:D\to E$ is finite surjective, we have $f_*[E]=d[D]$ for some $d>0$.
    \end{enumerate}

\end{proof}
\begin{lemma}\label{Cl by Pic}
    Let $\phi:Y\to X$ be a birational morphism of normal quasi-projective varieties over a field $k$ with $\text{Ex}(\phi)$ having codimension $\geq 2$, and $\dim \frac{\textrm{Cl }X}{\textrm{Pic }X}\otimes \mathbb{Q}<\infty$. If $\phi$ is not an open immersion, then we have $$\dim \frac{\textrm{Cl }Y}{\textrm{Pic }Y}\otimes \mathbb{Q}<\dim \frac{\textrm{Cl }X}{\textrm{Pic }X}\otimes \mathbb{Q}.$$
\end{lemma}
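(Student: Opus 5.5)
The plan is to build a natural surjection from $\frac{\textrm{Cl }X}{\textrm{Pic }X}\otimes\mathbb{Q}$ onto $\frac{\textrm{Cl }Y}{\textrm{Pic }Y}\otimes\mathbb{Q}$, and then exhibit a nonzero element of its kernel whenever $\phi$ is not an open immersion. The strict inequality then follows, because the target of a surjection of finite-dimensional spaces with nonzero kernel has strictly smaller dimension.

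\textbf{Step 1: the surjection.} Let $U=Y\setminus \text{Ex}(\phi)$, so $\textrm{codim}(Y\setminus U)\geq 2$, and $\phi|_U$ is an isomorphism onto an open $V\subset X$. Since $Y$ is normal and $Y\setminus U$ has codimension $\geq 2$, restriction gives $\textrm{Cl }Y\cong \textrm{Cl }U\cong \textrm{Cl }V$. Restriction $\textrm{Cl }X\to\textrm{Cl }V$ is surjective by excision \cite[Proposition 1.8]{fulton2013intersection}. Together these give a surjection $\rho:\textrm{Cl }X\to\textrm{Cl }Y$, the strict transform $D\mapsto\overline{\phi|_U^{-1}(D\cap V)}$.

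For a Cartier divisor $L$ on $X$, the Cartier divisor $\phi^*L$ and the Weil divisor $\rho(L)$ agree on $U$. By normality of $Y$ and $\textrm{codim}(Y\setminus U)\geq 2$, they are equal. Hence $\rho(\textrm{Pic }X)\subset\textrm{Pic }Y$, and $\rho$ descends to a surjection
$$\bar\rho:\frac{\textrm{Cl }X}{\textrm{Pic }X}\otimes\mathbb{Q}\twoheadrightarrow\frac{\textrm{Cl }Y}{\textrm{Pic }Y}\otimes\mathbb{Q}.$$
In particular the right-hand side is finite dimensional.

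\textbf{Step 2: a positive-dimensional fibre.} If $\phi$ were quasi-finite, then Zariski's Main Theorem would apply, since $\phi$ is birational, separated and of finite type and $X$ is normal. It would make $\phi$ an open immersion. So some closed point $x\in X$ has a fibre $F=\phi^{-1}(x)$ of positive dimension. Let $C\subset F$ be an irreducible curve.

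\textbf{Step 3: the kernel element.} Since $Y$ is quasi-projective, choose an ample Cartier divisor $H$ on $Y$. Since $\rho$ is surjective, pick $D\in\textrm{Cl }X$ with $\rho(D)=H$; concretely, $D$ is the closure of $\phi(H\cap U)$. Then $\bar\rho([D])=0$, and I claim $[D]\neq0$, i.e.\ no multiple $mD$ ($m>0$) is Cartier.

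Suppose $mD$ were Cartier. By Step 1, $\phi^*(mD)=\rho(mD)=mH$ as Cartier divisors. Now $mD$ is trivial on a neighbourhood $W$ of $x$, so $\mathcal{O}_Y(mH)$ is trivial on $\phi^{-1}(W)\supset C$. If $C$ is complete, this contradicts ampleness, because $\deg \mathcal{O}(mH)|_C>0$. So $[D]$ is a nonzero element of $\ker\bar\rho$, and the inequality follows.

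\textbf{Main obstacle.} The delicate point is guaranteeing a \emph{complete} curve $C$ contracted by $\phi$, because $\phi$ need not be proper and the fibre $F$ could a priori be affine. My first attempt is to compactify. Choose a normal projective $\overline{Y}\supset Y$, and let $Y'$ be the normalization of the closure of the graph of $\phi$ in $\overline{Y}\times X$. Then $Y'\to X$ is proper and birational with $X$ normal, so by Zariski's connectedness its fibre over $x$ is connected and proper.

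I would then use the hypothesis $\textrm{codim}\,\text{Ex}(\phi)\geq2$, together with the fact that $\phi(\text{Ex}(\phi))\cap V=\emptyset$ (injectivity on $U$), to argue as follows. A positive-dimensional component of $F$ cannot escape into the boundary $Y'\setminus Y$ unless $\phi$ already fails to be an open immersion in a detectable codimension-one way. That case should be handled separately: there $X\setminus V$ contains a prime divisor whose class lies in $\ker\rho$, and I would show that this class is not $\mathbb{Q}$-Cartier, or is otherwise nonzero in the quotient.

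If this compactification argument becomes unwieldy, the fallback is to replace ampleness by the weaker requirement that $\mathcal{O}(mH)|_C$ be non-torsion in $\textrm{Pic }C$ for the chosen $C$. This would mean choosing $H$ adapted to $C$, e.g.\ the restriction of a hyperplane section of the projective closure $\overline{C}$ passing through a boundary point of $\overline{C}$ in a suitable way.
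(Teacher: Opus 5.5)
Your Steps 1 and 2 are correct and match the paper. Step 3 has a genuine gap, which you flag yourself but do not close. The ampleness argument needs a \emph{complete} curve contracted by $\phi$, and since $\phi$ is not proper, such a curve need not exist.

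Here is a concrete instance. Let $Y_0\to X=\{ab=cd\}\subset\mathbb{A}^4$ be a small resolution, with exceptional curve $\ell\cong\mathbb{P}^1$, and set $Y=Y_0\setminus\{p\}$ for a point $p\in\ell$. All hypotheses hold: $\text{Ex}(\phi)=\ell\setminus\{p\}$ has codimension $2$, and $\dim\frac{\textrm{Cl }X}{\textrm{Pic }X}\otimes\mathbb{Q}=1$. Also $\phi$ is not an open immersion. But the only positive-dimensional fibre is $C\cong\mathbb{A}^1$, and $\textrm{Pic}\,\mathbb{A}^1=0$. So $\mathcal{O}(mH)|_C$ is trivial for every $H$, and neither the degree argument nor your fallback (making $\mathcal{O}(mH)|_C$ non-torsion) can work.

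The compactification sketch does not rescue this. In this example there is no codimension-one failure at all, so your proposed dichotomy says nothing about it. More generally, on a compactification $\psi:Y'\to X$ the Cartier divisors $\psi^*(mD)$ and $mH'$ differ by a divisor supported on $Y'\setminus Y$ whose coefficients have unknown sign. Intersecting with $\overline{C}$ therefore gives no contradiction without further input (of negativity-lemma type) that you have not supplied.

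The missing idea, which is what the paper uses, is to replace degrees by supports; this needs no properness.
\begin{itemize}
\item Take two distinct closed points $y_1,y_2$ in the fibre $F=\phi^{-1}(x)$.
\item Using quasi-projectivity of $Y$, choose an \emph{effective} Cartier divisor $H$ with $y_1\in|H|$ and $y_2\notin|H|$. For instance, take the zero scheme of a section of $L^N$, with $L$ ample, vanishing at $y_1$ but not at $y_2$.
\item Let $G$ be the closure of $\phi(H\cap U)$, so $G$ is effective and $\rho(G)=H$.
\item Suppose $mG$ were Cartier. Near $x$ it equals $\textrm{div}(f)$ with $f$ regular on a neighbourhood $W$, since $mG$ is effective.
\item By your Step 1, $\phi^*(mG)=mH$ as Cartier divisors, so $\phi^*f$ is a local equation of $mH$ on $\phi^{-1}(W)$.
\item Every point of $F$ maps to $x$, so $\phi^*f$ vanishes at a point of $F$ if and only if $f$ vanishes at $x$. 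Hence $|H|$ either contains all of $F$ or misses it, contradicting the choice of $y_1,y_2$.
\end{itemize}
The paper phrases the same contradiction as follows. $\phi^{-1}(|D|)$ is pure of codimension one, so, because $\textrm{codim}\,\text{Ex}(\phi)\geq 2$, it equals the closure of $\phi^{-1}(|D|)\setminus\text{Ex}(\phi)$, which is $|H|$.

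With Step 3 replaced by this argument, the rest of your proof goes through unchanged.
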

\begin{proof}
  let $E=\text{Ex}(\phi)$ and $F=\text{Ex}(\phi^{-1})=X\setminus\phi(Y\setminus E)$. Let $\phi^*$ denote the composition: $$\text{Div}(X)\twoheadrightarrow \text{Div}(X\setminus F)\cong \text{Div}(Y\setminus E) \cong \text{Div}(Y).$$ Here the first surjection in induced by restriction, second isomorphism is induced by the isomorphism $Y\setminus E\xrightarrow{\phi} X\setminus F$, and the third isomorphism is induced by restriction, noting that codim $E\geq 2$. Note that $\phi^*$ induces a surjection $$ \overline{\phi^*}:\frac{\textrm{Cl }X}{\textrm{Pic }X}\otimes \mathbb{Q}\to \frac{\textrm{Cl }Y}{\textrm{Pic }Y}\otimes \mathbb{Q}.$$ It suffices to show that $\overline{\phi^*}$ is not injective, if $\phi$ is not an open immersion.
  
 As $\phi$ is not an open immersion, we have $E\neq \varnothing.$ As $X$ is normal, by Zariski's main theorem $\phi|_E$ is not quasi finite. So, there are two distinct points $x$ and $y$ in $E$ with $\phi(x)=\phi(y)$. As $X$ is quasi-projective, there is an effective Cartier divisor $H$ in $X$ with $x\in |H|$ but $y\not\in |H|$:  if $L$ is an ample line bundle on $X$, then we can take $H$ to be the zero scheme of a section of $L^N$ vanishing at $x$ but not vanishing at $y$, which exists for all sufficiently large integers $N$. Taking closures of the irreducible components of $\phi(H\setminus E)$, we get an effective Weil divisor $G$ in $\text{Div}(X)$ with $\phi^*(G)$ the Weil divisor associated to $H$. As $H$ is effective Cartier we see that $\overline{\phi^*}(\overline{G})=0$, where $\overline{G}$ denotes the class of $G$ in $\frac{\textrm{Cl }X}{\textrm{Pic }X}\otimes \mathbb{Q}$.  We proceed to show that $\overline{G}\neq 0$.
 
 Suppose $\overline{G}= 0$. So, $mG$ is Cartier for some integer $m>0$. In other words, there is an effective Cartier divisor $D$ in $X$ whose associated Weil divisor is $mG.$

As $\phi^{-1}(D)$ is effective Cartier and $\phi^{-1}(|D|)=|\phi^{-1}(D)|$, we see that $\phi^{-1}(|D|)$ has pure codimension $1$ in $Y$. So, $\phi^{-1}(|D|)=\overline{\phi^{-1}(|D|)\setminus E}=|H|$. As $y\not\in |H|$, we get $y\not\in \phi^{-1}(|D|)$, that is, $\phi(y)\not\in |D|$. As $\phi(x)=\phi(y)$, we have $\phi(x)\not\in |D|$. But as $x\in H$, so $\phi(x)$ is in the support of $D$, hence $\phi(x)\in |D|$, a contradiction.

\end{proof}
Now we are ready to prove Theorem $\ref{main}.$

\textit{Proof of Theorem \ref{main}:}
Let $Z\subset X$ be a closed subset of codimension $\geq 2$ such that $\phi_{X\setminus Z}$ is injective.
There is a finitely generated subfield $K$ of $k$ such that $X, Z, \phi$, and the data of a locally closed embedding of $X$ in a projective space are all defined over $K$. So replacing $k$ by $K$, we can assume $k$ is a finitely generated field. 

We can assume without loss of generality that $X$ is normal, by \cite{kaliman2005theorem}. So by Lemma \ref{fg}, we have $\dim \frac{\textrm{Cl }X}{\textrm{Pic }X}\otimes \mathbb{Q}<\infty$. Also, the injectivity hypothesis, together with Zariski's main theorem implies codim $\text{Ex}(\phi)\geq 2$. Hence by Lemma \ref{Cl by Pic}, $\phi$ must be an open immersion. Finally by Ax's theorem as mentioned in the introduction (see \cite{ax1969injective}), $\phi$ is surjective, therefore an isomorphism.
\begin{remark}
    The only thing we used about the quasi-projectivity of the variety $X$ is the following: given any two distinct points $x$ and $y$ in $X$, there is an effective Cartier divisor $H$ in $X$ with $x\in |H|$ but $y\not\in |H|$. So, Theorem \ref{main} is true for whenever $X$ is a variety, not necessarily quasi-projective, satisfying the above hypothesis.
\end{remark}
 \section{Acknowledgement}
 I thank Prof. János Kollár for giving valuable ideas and insightful discussions. I also thank Prof. Jakub Witaszek for giving helpful comments and feedbacks about the paper.
\printbibliography
\vspace{40pt}
\begin{flushleft}
{\scshape Department of Mathematics, Fine Hall, Princeton University, Princeton, NJ 700108, USA}.

{\fontfamily{cmtt}\selectfont
\textit{Email address: ss6663@princeton.edu} }
\end{flushleft}
\end{document}